\DeclareSymbolFont{EulerExtension}{U}{euex}{m}{n}
\DeclareMathSymbol{\euintop}{\mathop} {EulerExtension}{"52}
\DeclareMathSymbol{\euointop}{\mathop} {EulerExtension}{"48}
\def \id{\operatorname{Id}}
\def \C{\mathcal{C}}
\def \To{\longrightarrow}
\def \dim{\operatorname{dim}}
\def \Hom{\operatorname{Hom}}
\def \Ob{\operatorname{Ob}}
\def \End{\operatorname{end}}
\def \Coend{\operatorname{coend}}
\def \Id{\operatorname{Id}}
\def \id{\operatorname{Id}}
\def \C{\mathcal{C}}
\numberwithin{equation}{section}
\newtheorem{theorem}{Theorem}[section]
\newtheorem{lemma}[theorem]{Lemma}
\newtheorem{proposition}[theorem]{Proposition}
\newtheorem{definition}[theorem]{Definition}
\newtheorem{example}[theorem]{Example}
\begin{document}
\title{Construction of coend and the reconstruction theorem of bialgebras}
\thanks{$^\dag$Supported by NSFC 11722016.}

\subjclass[2020]{18A30 (primary), 16T15 (secondary)}
\keywords{end(coend), Reconstruction theorem.}

\author{Kun Zhou}
\address{Department of Mathematics, Nanjing University, Nanjing 210093, China} \email{dg1721021@smail.nju.edu.cn}
\date{}
\maketitle
\begin{abstract} Assume $k$ is a field and let $\mathcal{F}:\C\rightarrow Vect_{k}$ be a small $k$-linear functor from a $k$-linear abelian category $\C$ to the category of vector spaces over the field $k$, the purpose of this note is to use a little knowledge of linear algebra and category to give the description of $\End(\mathcal{F})$ and $\Coend(\mathcal{F})$, and then we give the reconstruction theorem of bialgebras by using this description. We use a constructive approach to understand $\End(\mathcal{F})$ and $\Coend(\mathcal{F})$. We must point out that this note does not draw any essentially new conclusions except for the fact that part of the proof is new, but rather to try to understand the concept of end (resp. coend) from a different perspective.
\end{abstract}

\section{Introduction}
Let $k$ be a field and let $\mathcal{F}:\C\rightarrow Vect_{k}$ be a $k$-linear functor from a small $k$-linear abelian category $\C$ to the category of vector spaces over the field $k$, and we will use the $\mathcal{F}$ and the $Vect_{k}$ without explanation in the following content. This paper is organized as follows. In Section 2, we will give the description of $\End(\mathcal{F})$ (resp. $\Coend(\mathcal{F})$) as vector spaces, and we will give an example of $\Coend(\mathcal{F})$ as our motivation for this note. Then we will give the coalgebra structure of $\Coend(\mathcal{F})$ when $\dim(\mathcal{F}(X))<\infty$ for all $X\in \Ob(\C)$, and then we will prove that $\End(\mathcal{F})=\Coend(\mathcal{F})^*$ as algebra by using a different way from that in \cite[Section 1.10]{PT}. If $(\C,\otimes,I)$ is a strict tensor category and $(\mathcal{F},\Id_I,\mathcal{F}_2)$ is a tensor functor such that $\textrm{Im}\mathcal{F}$ are finite dimensional, then we will use a constructive approach to give the bialgebra structure of $\Coend(\mathcal{F})$ in Section 3. At last, we give the reconstruction theorem of bialgebras by using our description of the bialgebra $\Coend(\mathcal{F})$. We must point out that this theorem has been obtained by many articles, what we have done is to describe the bialgebra structure of $\Coend(\mathcal{F})$ concretely.

\section{$\End(\mathcal{F})$ and $\Coend(\mathcal{F})$ as vector spaces}
Let $A$ be an algebra over $k$ and let $\mathcal{G}:A$-Mod$\To Vect_k$ be the forgetful functor from the category of left $A$ modules to $Vect_k$, then we can use the the algebra $\textrm{End}(\mathcal{G})=\text{Nat}(\mathcal{G}, \mathcal{G} )$ of natural transformations from $\mathcal{G}$ to $\mathcal{G}$ to reconstruct $A$ through the algebra isomorphism $\varphi:A\rightarrow \textrm{End}(\mathcal{G})$, where $\varphi(a)_{\textrm{V}}:=a_{\textrm{V}}$ for $\textrm{V}\in \textrm{Ob}(A\textrm{-Mod})$. Dually, if $C$ is a coalgebra over $k$ and let $\mathcal{E}:C$-Comod$\To Vect_k$ be the forgetful functor from the category of right $C$ comodules to $Vect_k$, then we can ask if we can reconstruct $C$ by using the functor $\mathcal{E}$ ? To answer this question, we need the dual object of $\textrm{End}(\mathcal{F})$ to reconstruct $C$. But the definition of $\textrm{End}(\mathcal{F})$ depends on elements of it, and thus we recall the equivalent definition of $\textrm{End}(\mathcal{F})$ by using morphisms to get the dual concept of it.
\subsection{Definition of $\End(\mathcal{F})$ and $\Coend(\mathcal{F})$ as vector spaces}
For convenience, we introduce a $k$-linear bifunctor $\Hom:\C^{op}\times \C \To Vect_k$, which is defined by $\Hom(X,Y):=\Hom_k(\mathcal{F}(X),\mathcal{F}(Y))$ for $X,Y\in \Ob(\C)$ and $\Hom(f^{op},g)(T):=F(g)\circ T \circ F(f)$ for $f\in \Hom_{\C}(X',X),\; g\in \Hom_{\C}(Y,Y'),\; T\in \Hom_k(\mathcal{F}(X),\mathcal{F}(Y))$, here $f^{op}$ is the morphism $f$ in $\C^{op}$. In particular, $\Hom(f^{op},\Id_Y)(T)=T \circ F(f)$ and $\Hom(\id_X,g)(T)=F(g)\circ T$. Let $\mathcal{H}:\C\rightarrow \mathcal{D}$ be a $k$-linear functor, where $\C,\;\mathcal{D}$ are $k$-linear abelian categories, then the general definition of $\End(\mathcal{H})$ and $\Coend(\mathcal{H})$ can be found in \cite[2.1.6. Definition]{P}. But for our purposes, we consider only the case where $\{\mathcal{H}(X)\}_{X\in \Ob(\C)}$ are vector spaces in this note.
\begin{definition}\cite[2.1.6. Definition]{P}\label{def2.1}
The $\End(\mathcal{F})$ is a vector space such that following conditions
\begin{itemize}
  \item[(i)] there is a family of linear maps $\{\pi_X:\End(\mathcal{F})\To \Hom(X,X)\}_{X\in \Ob(\C)}$,
  \item[(ii)] the following diagrams commute for all $X,Y\in \Ob(C)$ and $f\in \Hom_{\C}(X,Y)$,
  $$\xymatrix{
    \Hom(Y,Y)\ar[rr]^{\Hom(f^{op},\Id_Y)} & & \Hom(X,Y) \\
    \End(\mathcal{F})\ar[u]^{\pi_{Y}}\ar[rr]^{\pi_{X}} & & \Hom(X,X)\ar[u]_{\Hom(\Id_X,f)}
     }$$
    Here $\pi_X$ and $\pi_Y$ are linear maps of \emph{(i)}.
  \item[(iii)] $\End(\mathcal{F})$ is a final object of $Vect_k$ such that \emph{(i)} and \emph{(ii)}, i.e if there is a vector space $U$ and there are linear maps $\{\pi_X^U:U \To \Hom(X,X)\}_{X\in \Ob(\C)}$ making the following diagrams commute for all $X,Y\in \Ob(\C)$ and $f\in \Hom_{\C}(X,Y)$,
      $$\xymatrix{
    \Hom(Y,Y)\ar[rr]^{\Hom(f^{op},\Id_Y)} & & \Hom(X,Y) \\
    \End(\mathcal{F})\ar[u]^{\pi_{Y}^U}\ar[rr]^{\pi_{X}^U} & & \Hom(X,X)\ar[u]_{\Hom(\Id_X,f)}
     }$$
      then we can find a unique linear map $\varphi:U\To \End(\mathcal{F})$ making the following diagrams commute for all $X\in \Ob(\C)$,
      $$\xymatrix{
    U \ar[rr]^-{\varphi}\ar[dr]_-{\pi_X^U} & & \End(\mathcal{F}) \ar[dl]^-{\pi_X} \\
     & \Hom(X,X) &
     }$$

\end{itemize}
\end{definition}
It can be seen that the above condition (ii) is equivalent to $\pi_Y(x)\circ \mathcal{F}(f)=\mathcal{F}(f)\circ \pi_X(x)$ for $X,Y\in \Ob(\C),\;f\in \Hom_{\C}(X,Y),\;x\in \End(\mathcal{F})$. If $U$ is a vector space with a family of linear maps $\{\pi_X^U:U \To \Hom(X,X)\}_{X\in \Ob(\C)}$ and these linear maps satisfy (ii), (iii) of Definition \ref{def2.1}, then we will call that $U$ is a realization of $\End(\mathcal{F})$. Naturally we have following proposition
\begin{proposition}\label{pro2.2}
The $\End(\mathcal{F})$ above exists and it is unique up to isomorphism.
\end{proposition}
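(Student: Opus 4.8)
The plan is to read Definition \ref{def2.1} as the specification of a terminal object in the category of ``wedges'' over the bifunctor $\Hom$, and to handle uniqueness and existence separately.

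For uniqueness I would argue purely formally from condition (iii). Suppose two vector spaces $E$ and $E'$, equipped with families $\{\pi_X\}$ and $\{\pi_X'\}$ respectively, both satisfy (i)--(iii). Applying the universal property of $E$ to the wedge $(E',\{\pi_X'\})$ yields a unique linear map $\varphi:E'\To E$ with $\pi_X\circ\varphi=\pi_X'$ for all $X$, and symmetrically a unique $\psi:E\To E'$ with $\pi_X'\circ\psi=\pi_X$. Then $\varphi\circ\psi$ and $\Id_E$ both mediate the wedge $(E,\{\pi_X\})$ through itself, so the uniqueness clause of (iii) forces $\varphi\circ\psi=\Id_E$, and likewise $\psi\circ\varphi=\Id_{E'}$. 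Hence $E\cong E'$. This is the standard fact that final objects are unique up to unique isomorphism and presents no difficulty.

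For existence I would construct $\End(\mathcal{F})$ explicitly as
$$\End(\mathcal{F}):=\Big\{(x_X)_{X\in\Ob(\C)}\in\prod_{X\in\Ob(\C)}\Hom(X,X)\ :\ \mathcal{F}(f)\circ x_X=x_Y\circ\mathcal{F}(f)\ \text{ for all } f\in\Hom_{\C}(X,Y)\Big\},$$
a linear subspace of the product, with $\pi_X$ the restriction of the projection onto the $X$-th factor. Equivalently this is the equalizer in $Vect_k$ of the two evident maps $\prod_X\Hom(X,X)\rightrightarrows\prod_f\Hom(X,Y)$ induced by $\Hom(\Id_X,f)$ and $\Hom(f^{op},\Id_Y)$. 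Here I would emphasize that the smallness of $\C$ is exactly what guarantees that this product ranges over a set and so is a genuine vector space; this is the only real set-theoretic subtlety.

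With this candidate, (i) holds since each $\pi_X$ is linear, and (ii) is literally the defining relation of the subspace, using the text's reformulation of (ii) as $\pi_Y(x)\circ\mathcal{F}(f)=\mathcal{F}(f)\circ\pi_X(x)$. For the universal property (iii), given any $(U,\{\pi_X^U\})$ satisfying (ii), I would define $\varphi:U\To\End(\mathcal{F})$ by $\varphi(u):=(\pi_X^U(u))_{X\in\Ob(\C)}$; the hypothesis (ii) on $U$ says precisely that each family $(\pi_X^U(u))_X$ satisfies the subspace condition, so $\varphi$ is well defined and linear. The relation $\pi_X\circ\varphi=\pi_X^U$ is immediate, and any $\varphi'$ with $\pi_X\circ\varphi'=\pi_X^U$ for all $X$ must have $X$-component $\pi_X^U$, hence equals $\varphi$; this yields uniqueness. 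I expect no genuine obstacle here: the entire content is that the equalizer of a small diagram exists in $Vect_k$, and the remaining verifications are immediate once the candidate is written down. The only step demanding care is the smallness hypothesis flagged above, without which the defining product---and hence $\End(\mathcal{F})$ itself---need not exist.
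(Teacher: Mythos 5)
Your proposal is correct and is essentially the paper's own argument: the subspace of $\prod_{X\in\Ob(\C)}\Hom(X,X)$ cut out by $\mathcal{F}(f)\circ x_X=x_Y\circ\mathcal{F}(f)$ is precisely the space $\mathrm{Nat}(\mathcal{F},\mathcal{F})$ of natural endomorphisms of $\mathcal{F}$, which is exactly the realization the paper uses, with the same projections $\pi_X(\eta)=\eta_X$. Your write-up merely makes explicit (the equalizer description, the smallness point, and the verification of the universal property) what the paper dismisses as ``easy to see.''
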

\begin{proof}
If we define $\{\pi_X^{\mathcal{F}} :\textrm{End}(\mathcal{F})\To \Hom(X,X)\}_{X\in \Ob(\C)}$ by $\pi_X^{\mathcal{F}}(\eta)=\eta_{X}$ where $\eta\in \textrm{End}(\mathcal{F})$ and ${X\in \Ob(\C)}$, then it is easy to see that these linear maps satisfy the conditions (ii),(iii) of Definition \ref{def2.1}. Therefore $\textrm{End}(\mathcal{F})$ is a realization of $\End(\mathcal{F})$. Because $\End(\mathcal{F})$ is a final object, we know $\End(\mathcal{F})$ is unique up to isomorphism as vector space.
\end{proof}
Dually, we can now define the $\Coend(\mathcal{F})$ by changing the direction of arrows of diagrams in Definition \ref{def2.1}.
\begin{definition}\cite[2.1.6. Definition]{P}\label{def2.3}
The $\Coend(\mathcal{F})$ is a vector space such that following conditions
\begin{itemize}
  \item[(i)] there is a family of linear maps $\{i_X:\Hom(X,X)\To \Coend(\mathcal{F})\}_{X\in \Ob(\C)}$,
  \item[(ii)]  the following diagrams commute for all $X,Y\in \Ob(C)$ and $f\in \Hom_{\C}(Y,X)$,
  $$\xymatrix{
    \Hom(Y,Y)\ar[d]_{i_Y} & & \Hom(X,Y)\ar[ll]_{\Hom(f^{op},\Id_Y)} \ar[d]^{\Hom(\Id_X,f)}\\
    \Coend(\mathcal{F}) & & \Hom(X,X)\ar[ll]_{i_X}
     }$$
     Here $i_X$ and $i_Y$ are linear maps of \emph{(i)}.
  \item[(iii)] $\Coend(\mathcal{F})$ is an initial object of $Vect_k$ such that \emph{(i)} and \emph{(ii)}, i.e if there is a vector space $U$ and there are linear maps $\{i_X^U:\Hom(X,X) \To  U \}_{X\in \Ob(\C)}$ making the following diagrams commute for all $X,Y\in \Ob(C)$ and $f\in \Hom_{\C}(Y,X)$,
      $$\xymatrix{
    \Hom(Y,Y)\ar[d]_{i_Y^U} & & \Hom(X,Y)\ar[ll]_{\Hom(f^{op},\Id_Y)} \ar[d]^{\Hom(\Id_X,f)}\\
    \Coend(\mathcal{F}) & & \Hom(X,X)\ar[ll]_{i_X^U}
     }$$
       then we can find a unique linear map $\varphi:\Coend(\mathcal{F})\To U$ making the following diagrams commute for all $X\in \Ob(\C)$,
      $$\xymatrix{
    \Coend(\mathcal{F}) \ar[rr]^-{\varphi} & & U  \\
     & \Hom(X,X)\ar[ul]^-{i_X} \ar[ur]_-{i_X^U} &
     }$$

\end{itemize}
\end{definition}
If $U$ is a vector space with a family of linear maps $\{i_X^U:\Hom(X,X) \To  U \}_{X\in \Ob(\C)}$ which satisfy the conditions (ii), (iii) of Definition \ref{def2.3}, then we will call $U$ is a realization of $\Coend(\mathcal{F})$. Note that the above condition (ii) is equivalent to $i_X(\mathcal{F}(f)\circ T)=i_Y(T\circ\mathcal{F}(f))$ for $X,Y\in \Ob(\C),\;f\in \Hom_{\C}(Y,X)$ and $T\in \Hom(X,Y)$, which we will use frequently.
Similar to Proposition \ref{pro2.2}, we have
\begin{proposition}\label{pro2.4}
The $\Coend(\mathcal{F})$ above exists and it is unique up to isomorphism.
\end{proposition}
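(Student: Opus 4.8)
The plan is to treat uniqueness as a purely formal consequence of the universal property and to establish existence by an explicit construction. Since $\Coend(\mathcal{F})$ is defined as an initial object among vector spaces carrying a family $\{i_X^U\}$ compatible in the sense of (ii), uniqueness up to (unique) isomorphism follows from the standard argument for initial objects, exactly as in Proposition~\ref{pro2.2}. Thus the substance of the proof lies in exhibiting a concrete realization.

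For existence, I would build $\Coend(\mathcal{F})$ as a quotient of a direct sum. Set $V:=\bigoplus_{X\in\Ob(\C)}\Hom(X,X)$ with canonical inclusions $\iota_X:\Hom(X,X)\To V$, and let $W\subseteq V$ be the subspace spanned by all elements $\iota_X(\mathcal{F}(f)\circ T)-\iota_Y(T\circ\mathcal{F}(f))$ with $f\in\Hom_{\C}(Y,X)$ and $T\in\Hom(X,Y)$. Define $\Coend(\mathcal{F}):=V/W$ and let $i_X:\Hom(X,X)\To\Coend(\mathcal{F})$ be $\iota_X$ followed by the quotient map $q:V\To V/W$. Condition (i) holds by construction, and condition (ii) holds precisely because the generators of $W$ are the differences that (ii) requires to vanish, so that $i_X(\mathcal{F}(f)\circ T)=i_Y(T\circ\mathcal{F}(f))$ in the quotient.

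It remains to verify the universal property (iii). Given any $U$ with maps $\{i_X^U\}$ satisfying (ii), the universal property of the direct sum yields a unique linear map $\widetilde{\varphi}:V\To U$ with $\widetilde{\varphi}\circ\iota_X=i_X^U$ for all $X$. Because $\{i_X^U\}$ satisfies (ii), $\widetilde{\varphi}$ sends each generator of $W$ to $i_X^U(\mathcal{F}(f)\circ T)-i_Y^U(T\circ\mathcal{F}(f))=0$, hence $W\subseteq\ker\widetilde{\varphi}$ and $\widetilde{\varphi}$ descends to a linear map $\varphi:\Coend(\mathcal{F})\To U$ with $\varphi\circ i_X=i_X^U$. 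For uniqueness of $\varphi$, observe that the images $i_X(\Hom(X,X))$ span $\Coend(\mathcal{F})$, since $q$ is surjective and $V$ is generated by the $\iota_X(\Hom(X,X))$; any map making the required triangles commute therefore agrees with $\varphi$ on a spanning set and so equals $\varphi$.

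I do not expect a serious obstacle: the argument is formally dual to Proposition~\ref{pro2.2}, the one point deserving care being the descent of $\widetilde{\varphi}$ to the quotient, which is exactly where the compatibility (ii) of the competitor $U$ is consumed. Note that, unlike the case of $\End(\mathcal{F})$, there is no manifest model such as $\mathrm{Nat}(\mathcal{F},\mathcal{F})$ to appeal to, so the quotient construction is essential; it is also this explicit description that will later carry the (co)algebra structure.
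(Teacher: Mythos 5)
Your proof is correct and follows essentially the same route as the paper: both construct $\Coend(\mathcal{F})$ as the quotient of $V=\bigoplus_{X\in\Ob(\C)}\Hom(X,X)$ by the subspace spanned by the differences $\mathcal{F}(f)\circ T-T\circ\mathcal{F}(f)$, and deduce uniqueness from initiality. You merely spell out the verification of conditions (ii) and (iii) that the paper leaves as ``easy to see.''
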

\begin{proof}
Let $V:=\bigoplus_{X\in \Ob(\C)}\Hom(X,X)$ and we denote $J$ be the subspace of $V$ which is linear spanned by $\{F(f)\circ T- T\circ F(f)|\;X,Y\in \Ob(\C),f\in \Hom_{\C}(X,Y),T\in \Hom(Y,X)\}$. If we define $\{j_X: \Hom(X,X)\To  V/J \}_{X\in \Ob(\C)}$ by $j_X(S)=S+J$ for $X\in \Ob(\C)$, then it is easy to see that these linear maps satisfy the conditions (ii),(iii) of Definition \ref{def2.1}. Therefore the quotient space $V/J$ is a realization of $\Coend(\mathcal{F})$. Since $\Coend(\mathcal{F})$ is an intial object, we know $\Coend(\mathcal{F})$ is unique up to isomorphism as vector space.
\end{proof}
Returning to the original problem at the beginning of this section, that is can we reconstruct the coalgebra $C$ by using the forgetful functor $\mathcal{E}:C$-Comod$\To Vect_k$ ? For technical reasons, we use another forgetful functor $\mathcal{T}:C$-Comod$_f\To Vect_k^f$ instead of using the $\mathcal{E}$, where $C$-Comod$_f$ is the category of finite dimensional right $C$ comodules and $Vect_k^f$ is the category of finite dimensional vector spaces over $k$. The following example shows that the coalgebra $C$ can be reconstructed from $\Coend(\mathcal{T})$ as vector space. Furthermore, we will prove that the coalgebra $C$ can be reconstructed from $\Coend(\mathcal{T})$ as coalgebra in Example \ref{ex2.8} and this example is the main motivation of this note. Recall that if $U,V$ are finite dimensional vector spaces and if we denote the linear dual space of $U$ by $U^*$, then $\Hom_k(U,V)=\{\beta\otimes x|\;\beta\in U^*,x\in V, (\beta\otimes x)(y):=\beta(y)x\}$. In order to describe the following results more convenient, we will use this fact often without further comment. Moreover we agree that $\beta\otimes x,\gamma\otimes y...$ means that $(\beta\otimes x)(u):=\beta(u)x,(\gamma\otimes y)(u):=\gamma(u)y...$, where English letters represent elements of a given vector space and Greek alphabet represent elements of dual space of this vector space.

\begin{example}\label{ex2.5}
\emph{Let $\mathcal{T}:C$-Comod$_f \To Vect_k^f$ be the forgetful functor above, and if we define a family of linear maps $\{i_X:\Hom_k(X,X)\To C\}_{X\in \Ob(\C)}$ by $i_X(\beta\otimes x):=\beta(x_{(0)})x_{(1)}$ then we will show that $C$ is a realization of $\Coend{\mathcal{T}}$. Therefore the coalgebra $C$ can be reconstructed from $\Coend(\mathcal{T})$ as vector space.
\\Let $g\in \Hom_{\C}(X,Y)$ and taking $(\gamma\otimes y) \in \Hom_k(Y,X)$, then we have $g(y_{(0)})\otimes y_{(1)}=g(y)_{(0)}\otimes g(y)_{(1)}$ by definition of the $g$. So $i_Y[(\gamma \otimes y)\circ g]=i_X[g\circ (\gamma \otimes y)]$ and this implies that $\{i_X\}_{X\in \Ob(\C)}$ such that (ii) of Definition \ref{def2.3}.
\\Assume $U$ is a vector space with linear maps $\{i_X^U:\Hom_k(X,X)\To U\}_{X\in \Ob(\C)}$ and these maps satisfy (ii) of Definition \ref{def2.3}. Define $\varphi:C\To U$ by $\varphi(c):=i_{N_c}^U(\epsilon\otimes c)$ for $c\in C$, where $N_c$ is the smallest subcoalgebra of $C$ which contains $c$ and its comodule structure is given by $\Delta$. Then we will show $\varphi$ satisfy the diagram of (iii) in Definition \ref{def2.3} and it is unique.
\\Taking $(\beta\otimes x)\in \Hom_k(X,X)$, directly we have $\varphi\circ i_X(\beta\otimes x)=i_{N_c}^U(\epsilon\otimes c)$ where $ c=\beta(x_{(0)}) x_{(1)}$. Since $X$ is finite dimensional, we can choose a finite dimensional subcoalgebra $C_1\subseteq C$ such that $\rho_X(X)\subseteq X\otimes C_1$. Then $\rho_X\in \Hom_{\C}(X,X\otimes C_1)$ where $\rho_{X\otimes C_1}:=(\Id\otimes \Delta)$. Let $(\beta\otimes \epsilon)\otimes x\in \Hom_k(X\otimes C_1,X)$ defined by $((\beta\otimes \epsilon)\otimes x)(y\otimes d):=\beta(y)\epsilon(d)x$, then $((\beta\otimes \epsilon)\otimes x)\circ \rho_X=\beta\otimes x$ and $\rho_X\circ ((\beta\otimes \epsilon)\otimes x)=(\beta\otimes \epsilon)\otimes (x_{(0)}\otimes x_{(1)})$. Since (ii) of Definition \ref{def2.3}, we get
\begin{align}
\label{e.2.1}i_X^U(\beta\otimes x)=i_{X\otimes C_1}^U[(\beta\otimes \epsilon)\otimes (x_{(0)}\otimes x_{(1)})].
\end{align}
Let $f:=(\beta\otimes \Id)\in \Hom_{\C}(X\otimes C_1,C_1)$ and $T:=[\epsilon\otimes (x_{(0)}\otimes x_{(1)})]\in \Hom_k(C_1,X\otimes C_1)$, where $(\beta\otimes \Id)(x\otimes c_1):=\beta(x)c_1$ and $[\epsilon\otimes (x_{(0)}\otimes x_{(1)})](c_1):=\epsilon(c_1)x_{(0)}\otimes x_{(1)})$ for $x\in X,c_1\in C$, then we know $i_{X\otimes C_1}^U(T\circ f)=i_{C_1}^U(f\circ T)$ since (ii) of Definition \ref{def2.3}. Because $T\circ f=(\beta\otimes \epsilon)\otimes (x_{(0)}\otimes x_{(1)})$ and $f\circ T=\epsilon\otimes c$, we get
\begin{align}
\label{e.2.2}i_{X\otimes C_1}^U[(\beta\otimes \epsilon)\otimes (x_{(0)}\otimes x_{(1)})]=i_{C_1}^U(\epsilon\otimes c).
\end{align}
Owing to \eqref{e.2.1}, \eqref{e.2.2}, we know $i_X^U(\beta\otimes x)=i_{C_1}^U(\epsilon\otimes c)$. Because there is a natural inclusion $N_c\hookrightarrow C_1$, we get $i_{C_1}^U(\epsilon\otimes c)=i_{N_c}^U(\epsilon\otimes c)$. Since $\varphi\circ i_X(\beta\otimes x)=i_{N_c}^U(\epsilon\otimes c)$ and $i_X^U(\beta\otimes x)=i_{C_1}^U(\epsilon\otimes c)$, we have $\varphi\circ i_X(\beta\otimes x)=i_X^U(\beta\otimes x)$ and hence $\varphi\circ i_X=i_X^U$. Note that $\varphi\circ i_{N_c}(\epsilon\otimes c)=i_X^U(\epsilon\otimes c)$ which gives the uniqueness of $\varphi$.}
\end{example}

\subsection{Coalgebra structure on $\Coend(\mathcal{F})$}
We have known that the $\textrm{End}(\mathcal{F})$ is a realization of $\End(\mathcal{F})$ and because the $\textrm{End}(\mathcal{F})$ is an algebra, so $\End(\mathcal{F})$ has a natural algebra structure. At the same time, the algebra structure of $\End(\mathcal{F})$ can also be determined by the following way. Assume $\{\pi_X^U:U \To \Hom(X,X)\}_{X\in \Ob(\C)}$ is a realization of $\End(\mathcal{F})$, if we consider linear maps $\{m_X:\End(\mathcal{F})\otimes \End(\mathcal{F})\To \Hom(X,X)\}_{X\in \Ob(\C)},\;\{\eta_X:k\To \Hom(X,X)\}_{X\in \Ob(\C)}$, where $m_X,\eta_X$ are defined by $m_X(a\otimes b):=\pi_X(a)\pi_X(b)$ and $\eta_X(1):=\Id_X$ for $X\in \Ob(\C)$, then we will get the multiplication and the unit of $\End(\mathcal{F})$ through these linear maps by using (iii) of Definition \ref{def2.1}, i.e the the multiplication $m$ and the unit $\eta$ of $\End(\mathcal{F})$ are the unique linear maps making the following diagrams commute for all $X\in \Ob(\C)$,
      $$\xymatrix{
    \End(\mathcal{F})\otimes \End(\mathcal{F}) \ar[rr]^-{m}\ar[dr]_-{m_X} & & \End(\mathcal{F}) \ar[dl]^-{\pi_X} \\
     & \Hom(X,X) &
     }
     \xymatrix{
     k \ar[rr]^-{\eta}\ar[dr]_-{\eta_X} & & \End(\mathcal{F}) \ar[dl]^-{\pi_X} \\
     & \Hom(X,X) &
     }
     $$
Dually, we can define coalgebra structure of $\Coend(\mathcal{F})$. For technical reasons, we require $\mathcal{F}(X)$ are finite dimensional for all $X\in \Ob(\C)$ in the following content. Let $\{x_i\}_{i=1}^{n}$ be a basis for $\mathcal{F}(X)$ and let $\{x^i\}_{i=1}^{n}$ be the dual basis for $\mathcal{F}(X)^*$, and we recall that $\Hom(X,X)$ has a natural coalgebra structure which is defined by $\Delta_X(\beta\otimes x):=\sum_{i=1}^n (\beta\otimes x_i)\otimes (x^i\otimes x)$ and $\epsilon_X(\beta\otimes x):=\beta(x)$, where $\beta\in X^*,x\in X$ and $(\beta\otimes x)(y):=\beta(y)x,(x^i\otimes x)(y):=x^i(y)x$. Now we can use this family of coalgebra algebras $\{\Hom(X,X)\}_{X\in \Ob(\C)}$ to give coalgebra structure of $\Coend(\mathcal{F})$.

\begin{definition}\label{def2.6}
Assume that $\{i_X^U:\Hom(X,X)\To U\}_{X\in \Ob(\C)}$ is a realization of $\Coend(\mathcal{F})$ and let $\{\Delta_X^U:\Hom(X,X)\To U\otimes U\}_{X\in \Ob(\C)},\;\{\epsilon_X^U:\Hom(X,X)\To k\}_{X\in \Ob(\C)}$ be linear maps defined by $\Delta_X^U:=(i_X^U\otimes i_X^U)\circ \Delta_X$ and $\epsilon_X^U:=\epsilon_X$. Then the coalgebra structure $(U,\Delta_U,\epsilon_U)$ is defined by the unique linear maps $\Delta_U,\epsilon_U$ which making the following diagrams commute for all $X\in \Ob(\C)$,
$$\xymatrix{
    U \ar[rr]^{\Delta_U}& & U\otimes U \\
     & \Hom(X,X) \ar[ul]^-{i_X^U} \ar[ur]_-{\Delta_X^U}&
     }
    \xymatrix{
    U \ar[rr]^{\epsilon_U}& & k \\
     & \Hom(X,X) \ar[ul]^-{i_X^U} \ar[ur]_-{\epsilon_X^U}&
     }
$$
\end{definition}
The following two propositions show that existence of $\Delta_U,\epsilon_U$ of Definition \ref{def2.6} and $(U,\Delta_U,\epsilon_U)\cong (V,\Delta_V,\epsilon_V)$, where $\{i_X^U:\Hom(X,X)\To U\}_{X\in \Ob(\C)}$ is another realization of $\Coend(\mathcal{F})$ and $(V,\Delta_V,\epsilon_V)$ is given by the Definition \ref{def2.6} above. For these reasons, the coalgebra $\Coend(\mathcal{F})$ given by the Definition \ref{def2.6} is unique up to isomorphism as coalgebra.
\begin{proposition}\label{pro2.7}
The $\Delta_U,\epsilon_U$ of Definition \ref{def2.6} exist and $(U,\Delta_U,\epsilon_U)$ is a coalgebra.
\end{proposition}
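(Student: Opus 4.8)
The plan is to deduce both the existence of $\Delta_U,\epsilon_U$ and the coalgebra axioms purely from the universal property (iii) of Definition \ref{def2.3}, together with the fact recalled just above the definition, that each $(\Hom(X,X),\Delta_X,\epsilon_X)$ is an honest (comatrix) coalgebra, i.e. coassociative and counital. Throughout I abbreviate $\phi:=\mathcal{F}(f)$ and, for each object $X$, I fix a basis $\{x_i\}$ of $\mathcal{F}(X)$ with dual basis $\{x^i\}$, and similarly $\{y_a\},\{y^a\}$ for $\mathcal{F}(Y)$.

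\emph{Existence of $\Delta_U$ and $\epsilon_U$.} By the universal property it is enough to show that the two families $\{\Delta_X^U:\Hom(X,X)\To U\otimes U\}_{X\in\Ob(\C)}$ and $\{\epsilon_X^U:\Hom(X,X)\To k\}_{X\in\Ob(\C)}$ satisfy condition (ii) of Definition \ref{def2.3} (with targets $U\otimes U$ and $k$); then $\Delta_U$ and $\epsilon_U$ are precisely the unique induced maps, characterised by $\Delta_U\circ i_X^U=\Delta_X^U$ and $\epsilon_U\circ i_X^U=\epsilon_X^U$. For the counit this is immediate: since $\epsilon_X(\beta\otimes x)=\beta(x)$ is the trace on $\End(\mathcal{F}(X))$, the required identity $\epsilon_X(\phi\circ T)=\epsilon_Y(T\circ\phi)$, for $f\in\Hom_{\C}(Y,X)$ and $T\in\Hom(X,Y)$, is exactly the cyclicity of the trace.

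The substantive point is the analogous identity for $\Delta_X^U$, namely $(i_X^U\otimes i_X^U)\Delta_X(\phi\circ T)=(i_Y^U\otimes i_Y^U)\Delta_Y(T\circ\phi)$. The key observation is that $\phi$ slides across a rank-one operator: one has $\phi\circ(\beta\otimes y_a)=\beta\otimes\phi(y_a)$ and $(\gamma\otimes y_a)\circ\phi=(\gamma\circ\phi)\otimes y_a$. Expanding $\Delta_X(\phi\circ T)$ and writing the matrix coefficients of $\phi\circ T$ as $\sum_a\phi_{ia}T_{aj}$, I regroup so that the single factor $\phi$ is absorbed, one basis index at a time, into each of the two tensor legs via $\sum_i\phi_{ia}(\beta\otimes x_i)=\phi\circ(\beta\otimes y_a)$, and is then carried across $i_X^U$ by the defining relation of the realization $i_X^U(\phi\circ S)=i_Y^U(S\circ\phi)$. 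Applying this once on each leg and re-collecting the coefficients into those of $T\circ\phi$ turns the left-hand side into the right-hand side. This two-fold use of the realization relation is the technical heart of the argument and the step I expect to be the main obstacle: one must correctly match the middle summation index of $\Delta_X$ (running over the basis of $\mathcal{F}(X)$) with that of $\Delta_Y$ (running over the basis of $\mathcal{F}(Y)$) while pushing $\phi$ through.

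\emph{Coalgebra axioms.} Here I exploit uniqueness. By construction $i_X^U$ is a coalgebra morphism, since $\Delta_U\circ i_X^U=(i_X^U\otimes i_X^U)\Delta_X$ and $\epsilon_U\circ i_X^U=\epsilon_X$. The uniqueness clause of (iii) yields a joint-epimorphism principle: any two linear maps out of $U$ that agree after precomposition with every $i_X^U$ must coincide, because both are then the unique factorisation of the same cowedge. To prove coassociativity I precompose $(\Delta_U\otimes\id)\Delta_U$ and $(\id\otimes\Delta_U)\Delta_U$ with $i_X^U$; using that $i_X^U$ is a coalgebra map, both reduce to $(i_X^U\otimes i_X^U\otimes i_X^U)$ applied to $(\Delta_X\otimes\id)\Delta_X$ and to $(\id\otimes\Delta_X)\Delta_X$ respectively, and these agree because $\Delta_X$ is coassociative. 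The joint-epimorphism principle then forces $(\Delta_U\otimes\id)\Delta_U=(\id\otimes\Delta_U)\Delta_U$. The two counit identities follow in exactly the same way from the counit axioms of $(\Hom(X,X),\Delta_X,\epsilon_X)$, completing the proof that $(U,\Delta_U,\epsilon_U)$ is a coalgebra.
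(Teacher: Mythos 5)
Your proposal is correct and follows essentially the same route as the paper: you verify that $\{\Delta_X^U\}$ and $\{\epsilon_X^U\}$ satisfy condition (ii) of Definition \ref{def2.3} (the counit case by cyclicity of the trace, the comultiplication case by sliding $\mathcal{F}(f)$ through each tensor leg via the rank-one identities and applying the realization relation once per leg, exactly as in the paper's displayed computation), and then transport coassociativity and counitality from the comatrix coalgebras $\Hom(X,X)$. The only cosmetic difference is that you deduce the coalgebra axioms from the uniqueness clause of the universal property (a joint-epimorphism principle), whereas the paper invokes the fact that $U$ is linearly spanned by the images of the $i_X^U$; both follow from Proposition \ref{pro2.4} and are interchangeable here.
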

\begin{proof}
To show the $\Delta_U,\epsilon_U$ exist, it is enough to check that $\{\Delta_X^U:\Hom(X,X)\To U\otimes U\}_{X\in \Ob(\C)},\;\{\epsilon_X^U:\Hom(X,X)\To k\}_{X\in \Ob(\C)}$ satisfy the (ii) of Definition \ref{def2.3}. Let $\{x_i\}_{i=1}^{n}$ (resp. $\{y_i\}_{i=1}^{m}$) be a basis for $\mathcal{F}(X)$ (resp. $\mathcal{F}(Y)$) and let $\{x^i\}_{i=1}^{n}$ (resp. $\{y^i\}_{i=1}^{m}$) be the dual basis for $\mathcal{F}(X)^*$ (resp. $\mathcal{F}(Y)^*$), and taking $f\in \Hom_{\C}(X,Y)$, $(\beta\otimes x)\in \Hom(Y,X)$, then
we have
\begin{align*}
\Delta_X^U((\beta\otimes x)\circ \mathcal{F}(f))&=\sum_{i=1}^n i_X^U(\beta\circ \mathcal{F}(f)\otimes x_i)\otimes i_X^U(x^i\otimes x)\\
                &=\sum_{i=1}^n i_Y^U(\beta \otimes \mathcal{F}(f)(x_i))\otimes i_X^U(x^i\otimes x)\\
        &=\sum_{i}^n \sum_{j=1}^mi_Y^U(\beta \otimes y_j <y^j,\mathcal{F}(f)(x_i)>)\otimes i_X^U(x^i\otimes x)\\
        &=\sum_{i}^n \sum_{j=1}^m i_Y^U(\beta \otimes y_j)\otimes i_X^U(<y^j,\mathcal{F}(f)(x_i)> x^i\otimes x)\\
        &=\sum_{j=1}^m i_Y^U(\beta \otimes y_j)\otimes i_X^U(y^j\circ\mathcal{F}(f)\otimes x)\\
        &=\sum_{j=1}^m i_Y^U(\beta \otimes y_j)\otimes i_X^U(y^j\otimes \mathcal{F}(f)(x))\\
        &=\Delta_Y^U(\mathcal{F}(f)\circ (\beta\otimes x) )
\end{align*}
and $\epsilon_X^U((\beta\otimes x)\circ \mathcal{F}(f))=\beta\circ\mathcal{F}(f)(x)=\epsilon_Y^U(\mathcal{F}(f)\circ (\beta\otimes x) )$, hence we know the $\Delta_U,\epsilon_U$ exist. Let $T\in \Hom(X,X)$ and we denote $i_X^U(T)$ of Definition \ref{def2.6} by $\overline{T}$, then $\Delta_U(\overline{T})=(i_X^U\otimes i_X^U)\circ \Delta_X(T)$ and $\epsilon_U(\overline{T})=\epsilon_X(T)$ by definition and so $(\Delta_U \otimes \Id)\circ \Delta_U(\overline{T})=(\Id \otimes \Delta_U )\circ \Delta_U(\overline{T}), \;(\epsilon_U \otimes \Id)\circ \Delta_U(\overline{T})=(\Id \otimes \epsilon_U )\circ \Delta_U(\overline{T})=(\overline{T})$. Since Proposition \ref{pro2.4}, we know $U$ is linear spanned by $\{\overline{T}|\;T\in \Hom(X,X)\}_{X\in \Ob(\C)}$ and hence $(U,\Delta_U,\epsilon_U)$ is a coalgebra.
\end{proof}

\begin{proposition}\label{pro2.8}
$(U,\Delta_U,\epsilon_U)\cong (V,\Delta_V,\epsilon_V)$ as coalgebra.
\end{proposition}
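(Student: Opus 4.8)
The plan is to produce the coalgebra isomorphism directly from the universal property of $\Coend(\mathcal{F})$ and then verify compatibility with the two coalgebra structures on a spanning set. First I would invoke condition (iii) of Definition \ref{def2.3} twice. Since $V$ is a realization and the maps $\{i_X^U\}$ satisfy (ii) of Definition \ref{def2.3}, there is a unique linear map $\varphi:U\To V$ with $\varphi\circ i_X^U=i_X^V$ for all $X\in\Ob(\C)$; symmetrically there is a unique linear map $\psi:V\To U$ with $\psi\circ i_X^V=i_X^U$. Then $\psi\circ\varphi\circ i_X^U=i_X^U=\Id\circ i_X^U$, so the uniqueness clause of (iii) forces $\psi\circ\varphi=\Id$, and likewise $\varphi\circ\psi=\Id$. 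Hence $\varphi$ is a linear isomorphism, and it is exactly the vector space isomorphism underlying Proposition \ref{pro2.4}.

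It remains to check that $\varphi$ is a morphism of coalgebras, that is $\Delta_V\circ\varphi=(\varphi\otimes\varphi)\circ\Delta_U$ and $\epsilon_V\circ\varphi=\epsilon_U$. By Proposition \ref{pro2.4} (as already used in the proof of Proposition \ref{pro2.7}) the space $U$ is linearly spanned by the elements $i_X^U(T)$ with $X\in\Ob(\C)$ and $T\in\Hom(X,X)$, so it suffices to verify these identities on such elements. Writing $\overline{T}:=i_X^U(T)$, we have $\varphi(\overline{T})=i_X^V(T)$, and applying Definition \ref{def2.6} on the $V$ side gives $\Delta_V(\varphi(\overline{T}))=(i_X^V\otimes i_X^V)\circ\Delta_X(T)$. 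On the other hand $\Delta_U(\overline{T})=(i_X^U\otimes i_X^U)\circ\Delta_X(T)$, so that $(\varphi\otimes\varphi)\circ\Delta_U(\overline{T})=\bigl((\varphi\circ i_X^U)\otimes(\varphi\circ i_X^U)\bigr)\circ\Delta_X(T)=(i_X^V\otimes i_X^V)\circ\Delta_X(T)$, using $\varphi\circ i_X^U=i_X^V$; the two expressions coincide. The counit identity is shorter still: $\epsilon_V(\varphi(\overline{T}))=\epsilon_X(T)=\epsilon_U(\overline{T})$, again by Definition \ref{def2.6}. Hence $\varphi$ is a coalgebra isomorphism and $(U,\Delta_U,\epsilon_U)\cong(V,\Delta_V,\epsilon_V)$.

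There is essentially no hard step here: the whole content is packaged into the universal property, which yields the underlying linear isomorphism for free, and the compatibility with comultiplication and counit reduces to the defining formulas $\Delta_X^U=(i_X^U\otimes i_X^U)\circ\Delta_X$ and $\epsilon_X^U=\epsilon_X$ of Definition \ref{def2.6} together with the intertwining relation $\varphi\circ i_X^U=i_X^V$. The only point that requires a moment's care is the reduction to the spanning set $\{i_X^U(T)\}$, so that checking the two coalgebra axioms on these generators suffices to conclude they hold on all of $U$; this is precisely the spanning statement coming from Proposition \ref{pro2.4}.
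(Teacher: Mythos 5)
Your proposal is correct and follows essentially the same route as the paper: obtain the linear isomorphism $\varphi$ with $\varphi\circ i_X^U=i_X^V$ from the initial-object property (you merely spell out the standard two-maps-plus-uniqueness argument that the paper compresses into one sentence), then verify $\Delta_V\circ\varphi=(\varphi\otimes\varphi)\circ\Delta_U$ and $\epsilon_V\circ\varphi=\epsilon_U$ on the spanning set $\{i_X^U(T)\}$ using the defining formulas of Definition \ref{def2.6}. No substantive difference from the paper's proof.
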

\begin{proof}
Because $\Coend(\mathcal{F})$ is an initial object of $Vect_k$, there exists invertible linear maps $\varphi:U\rightarrow V$ such that $\varphi(i_X^U(T))=i_X^V(T)$. Then we will check that $\varphi$ is a coalgebra map. Directly we have $\Delta_V(\varphi(i_X^U(T)))=(i_X^V\otimes i_X^V)\circ \Delta_X(T)=(\varphi\otimes \varphi)\circ \Delta_U(i_X^U(T))$ and $\epsilon_U(i_X^U(T))=\epsilon_X(T)=\epsilon_V(\varphi(i_X^U(T)))$. Note that $U$ is linear spanned by $\{i_X^U(T)|\;T\in \Hom(X,X)\}_{X\in \Ob(\C)}$ and hence $\varphi$ is a coalgebra isomorphism.
\end{proof}

We already saw in Example \ref{ex2.5} that $\Coend(\mathcal{T})$ can reconstruct $C$ as vector space, furthermore, the following example shows that $\Coend(\mathcal{T})$ also reconstruct $C$ as coalgebra.

\begin{example}\label{ex2.8}
\emph{Let $\mathcal{T}:\C\To Vect_k^f$ be the forgetful functor of Example \ref{ex2.5}, and we have known that $C$ is a realization of $\Coend{\mathcal{T}}$ as vector space. Therefore we have a coalgebra structure on $C$ by Definition \ref{def2.6} and we denote it by $(\Delta_C,\epsilon_C)$. Now let's prove that $(\Delta_C,\epsilon_C)=(\Delta,\epsilon)$, where $(\Delta,\epsilon)$ is the coalgebra structure of $C$ itself. We still use the notation of Example \ref{ex2.5} in the following context. Let $c\in C$ and we choose a finite dimensional subcoalgebra $C_1$ of $C$ which contains the $c$, then $i_{C_1}(\epsilon\otimes c)=c$ by definition. For convenience, we denote $i_{X}(T)$ by $\overline{T}$ and hence $c=\overline{\epsilon\otimes c}$. Let $\{x_i\}_{i=1}^{n}$ be a basis for $C_1$ and let $\{x^i\}_{i=1}^{n}$ be the dual basis for $C_1^*$, then we have
\begin{align*}
\Delta_C(\overline{\epsilon\otimes c})&=\overline{(\epsilon\otimes c)_{(1)}} \otimes \overline{(\epsilon\otimes c)_{(2)}}\\
                &=\sum_{i=1}^n \overline{(\epsilon\otimes x_i)}\otimes \overline{(x^i\otimes c)}\\
        &=\sum_{i=1}^n x_i \otimes <x^i,c_{(1)}>c_{(2)}\\
        &=c_{(1)}\otimes c_{(2)}=\Delta(c).
\end{align*}
Due to $\epsilon_C(\overline{\epsilon\otimes c})=\epsilon_{C_1}(\epsilon\otimes c)=\epsilon(c)$, we get $\epsilon_C=\epsilon$ and hence $(\Delta_C,\epsilon_C)=(\Delta,\epsilon)$. That is to say the coalgebra $\Coend{\mathcal{T}}$ reconstruct the coalgebra $C$.}
\end{example}
Since $\Coend(\mathcal{F})$ and $\End(\mathcal{F})$ are dual concept, we have the following proposition and we give its proof by using a different way from that in \cite[Section 1.10]{PT}.

\begin{proposition}\cite[Section 1.10]{PT}\label{pro2.9}
$\End(\mathcal{F})\cong \Coend(\mathcal{F})^*$ as algebra.
\end{proposition}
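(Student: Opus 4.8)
The plan is to identify $\Coend(\mathcal{F})^*$, equipped with the algebra structure dual to the coalgebra structure of Definition~\ref{def2.6} (with realization $U:=\Coend(\mathcal{F})$, comultiplication $\Delta_U$ and counit $\epsilon_U$), with $\End(\mathcal{F})$ under its composition product. The essential tool is that, since every $\mathcal{F}(X)$ is finite dimensional, the trace form $\langle S,T\rangle_X:=\operatorname{tr}(S\circ T)$ on $\Hom(X,X)=\Hom_k(\mathcal{F}(X),\mathcal{F}(X))$ is nondegenerate and hence identifies $\Hom(X,X)^*$ with $\Hom(X,X)$. I will use repeatedly the elementary identities $\operatorname{tr}(\beta\otimes x)=\beta(x)$ and $\operatorname{tr}((\beta\otimes x)\circ(\gamma\otimes y))=\beta(y)\gamma(x)$, which are exactly what turns the matrix comultiplication $\Delta_X$ on $\Hom(X,X)$ into ordinary composition after dualizing.

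First I would define $\Psi\colon\End(\mathcal{F})\To\Coend(\mathcal{F})^*$. Given $\eta\in\End(\mathcal{F})$ with components $\eta_X:=\pi_X(\eta)$, consider the functionals $\ell_X\colon\Hom(X,X)\To k$, $\ell_X(S):=\operatorname{tr}(\eta_X\circ S)$. To see that $\{\ell_X\}$ factors through $\Coend(\mathcal{F})$ I would check condition (ii) of Definition~\ref{def2.3} for the target $U=k$: for $f\in\Hom_{\C}(Y,X)$ and $T\in\Hom(X,Y)$ one needs $\operatorname{tr}(\eta_X\circ\mathcal{F}(f)\circ T)=\operatorname{tr}(\eta_Y\circ T\circ\mathcal{F}(f))$, and this is precisely naturality of $\eta$, namely $\eta_X\circ\mathcal{F}(f)=\mathcal{F}(f)\circ\eta_Y$, combined with the cyclic invariance of the trace. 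The universal property (iii) of Definition~\ref{def2.3} then yields a unique $\Psi(\eta)\in\Coend(\mathcal{F})^*$ with $\Psi(\eta)(i_X(S))=\operatorname{tr}(\eta_X\circ S)$.

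Next I would construct the inverse $\Phi$. For $\phi\in\Coend(\mathcal{F})^*$, nondegeneracy of the trace form produces a unique $p_X(\phi)\in\Hom(X,X)$ with $\operatorname{tr}(p_X(\phi)\circ S)=\phi(i_X(S))$ for all $S$. The point I expect to need the most care is showing that the family $\{p_X(\phi)\}$ is natural, i.e. $p_Y(\phi)\circ\mathcal{F}(g)=\mathcal{F}(g)\circ p_X(\phi)$ for $g\colon X\to Y$: pairing both sides against an arbitrary $R\colon\mathcal{F}(Y)\to\mathcal{F}(X)$ and using cyclicity reduces this, via the defining relation of $p_X(\phi)$, to the coend identity $i_Y(\mathcal{F}(g)\circ R)=i_X(R\circ\mathcal{F}(g))$, which is exactly (ii) of Definition~\ref{def2.3}. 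This is the step where the universal relation defining $\Coend(\mathcal{F})$ is matched, under the trace identification, against the naturality condition defining $\End(\mathcal{F})$. Thus $\Phi(\phi):=\{p_X(\phi)\}$ lies in $\End(\mathcal{F})$, and a short computation using nondegeneracy together with the fact that $\Coend(\mathcal{F})$ is spanned by the elements $i_X(S)$ (Proposition~\ref{pro2.4}) shows that $\Psi$ and $\Phi$ are mutually inverse and that $\Psi$ intertwines $\pi_X$ with $p_X$.

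Finally I would verify that $\Psi$ is an algebra map. Evaluating the dual product on a generator and expanding $\Delta_U\circ i_X=(i_X\otimes i_X)\circ\Delta_X$ from Definition~\ref{def2.6} gives $(\Psi(\eta)\cdot\Psi(\zeta))(i_X(\beta\otimes x))=\sum_i\operatorname{tr}(\eta_X\circ(\beta\otimes x_i))\operatorname{tr}(\zeta_X\circ(x^i\otimes x))$; the trace identities above collapse this to $\operatorname{tr}(\eta_X\circ\zeta_X\circ(\beta\otimes x))$. Since the product of $\End(\mathcal{F})$ satisfies $\pi_X(\eta\cdot\zeta)=\pi_X(\eta)\circ\pi_X(\zeta)=\eta_X\circ\zeta_X$, the right-hand side equals $\Psi(\eta\cdot\zeta)(i_X(\beta\otimes x))$, so $\Psi$ respects multiplication. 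For the units, $\operatorname{tr}(\Id_X\circ(\beta\otimes x))=\beta(x)=\epsilon_X(\beta\otimes x)$ shows $\Psi(\Id)=\epsilon_U$, which is the unit of $\Coend(\mathcal{F})^*$. Hence $\Psi$ is an isomorphism of algebras, which is the claim.
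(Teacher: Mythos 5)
Your proof is correct. It differs in organization from the paper's: the paper shows directly that $\Coend(\mathcal{F})^*$, equipped with the maps $\pi_X(\alpha):=\sum_{i,j}\alpha(i_X(x^i\otimes x_j))\,x^j\otimes x_i$, satisfies conditions (ii) and (iii) of Definition \ref{def2.1}, i.e.\ is itself a realization of $\End(\mathcal{F})$; the verification of (iii) there requires taking an arbitrary competitor $U$, dualizing it to a family $i_X^U(\beta\otimes x)(u):=\beta(\pi_X^U(u)(x))$ satisfying (ii) of Definition \ref{def2.3}, and transposing the resulting map $\Coend(\mathcal{F})\To U^*$. You instead fix the concrete realization $\mathrm{Nat}(\mathcal{F},\mathcal{F})$ from Proposition \ref{pro2.2} and build an explicit linear bijection onto $\Coend(\mathcal{F})^*$, invoking the universal property of the coend only once (to define $\Psi(\eta)$) and using nondegeneracy of the trace form for the inverse. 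The underlying pairing is the same in both arguments: your $\operatorname{tr}(\eta_X\circ(\beta\otimes x))=\beta(\eta_X(x))$ is exactly the paper's $i_X^U(\beta\otimes x)(u)$, and your $p_X$ coincides with the paper's $\pi_X$ written without coordinates. Your route avoids re-verifying the final-object property for $\Coend(\mathcal{F})^*$, at the cost of depending on the specific model of $\End(\mathcal{F})$ as natural transformations (which is harmless, since Proposition \ref{pro2.2} establishes that model and the algebra structure is defined through it). You also supply something the paper elides: the paper asserts that the $\pi_X$ are algebra maps ``due to the $i_X$ being coalgebra maps'' without computation, whereas your final paragraph actually carries out the check that dualizing the matrix comultiplication $\Delta_X$ yields composition, which is the real content of the multiplicativity claim.
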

\begin{proof}
Assume $\{i_X:\Hom(X,X)\To \Coend(\mathcal{F})\}_{X\in \Ob(\C)}$ is a realization of $\Coend(\mathcal{F})$, and let $\{\pi_X:\Coend(\mathcal{F})^*\To \Hom(X,X)\}_{X\in \Ob(\C)}$ be a family of linear maps which are defined by $\pi_X(\alpha):=\sum_{i,j=1}^n \alpha\circ i_X(x^i\otimes x_j)x^j\otimes x_i$ for $X\in \Ob(\C)$, where $\{x^i\}_{i=1}^{n}$ is the dual basis of $\mathcal{F}(X)^*$ corresponding with a given basis $\{x_i\}_{i=1}^{n}$ of $\mathcal{F}(X)$ and $(x^i\otimes x_j)(x):=x^i(x)x_j$, then we can see that $\{\pi_X\}_{X\in \Ob(\C)}$ are algebra maps due to $\{i_X\}_{X\in \Ob(\C)}$ are coalgebra maps. To complete the proof, we only need to show that $\{\pi_X:\Coend(\mathcal{F})^*\To \Hom(X,X)\}_{X\in \Ob(\C)}$ is a realization of $\End(\mathcal{F})$.

Firstly, we show that $\{\pi_X\}_{X\in \Ob(\C)}$ satisfy (ii) of Definition \ref{def2.1}. Let $f\in \Hom_{\C}(X,Y)$, $\alpha\in \Coend(\mathcal{F})^*$, and we assume $\{y^i\}_{i=1}^{m}$ is the dual basis of $\mathcal{F}(Y)^*$ corresponding with a given basis $\{y_i\}_{i=1}^{m}$ of $\mathcal{F}(Y)$. Since
\begin{align*}
\mathcal{F}(f)\circ \pi_X(\alpha)&=\sum_{i,j=1}^n \alpha\circ i_X(x^i\otimes x_j)x^j\otimes \mathcal{F}(f)(x_i)\\
                &=\sum_{i,j=1}^n \sum_{k=1}^m\alpha\circ i_X(x^i\otimes x_j)x^j\otimes y_k <y^k, \mathcal{F}(f)(x_i)> \\
        &=\sum_{i=1}^n \sum_{k=1}^m \alpha\circ i_X(y^k\circ \mathcal{F}(f) \otimes x_j)x^j\otimes y_k\\
        &=\sum_{i=1}^n \sum_{k=1}^m \alpha\circ i_X(y^k \otimes \mathcal{F}(f) (x_j))x^j\otimes y_k\\
        &=\sum_{i=1}^n \sum_{j,k=1}^m \alpha\circ i_X(y^k \otimes <y^i,\mathcal{F}(f) (x_j)> y_i)x^j\otimes y_k\\
        &=\sum_{i,k=1}^m \alpha\circ i_X(y^k \otimes y_i)y^i\circ \mathcal{F}(f) \otimes y_k\\
        &=\pi_Y\circ \mathcal{F}(f)(\alpha),
\end{align*}
we know $\{\pi_X\}_{X\in \Ob(\C)}$ satisfy (ii) of Definition \ref{def2.1}. \\
Secondly, we prove that $\{\pi_X\}_{X\in \Ob(\C)}$ such that (iii) of Definition \ref{def2.1}. If there is a vector space $U$ with linear maps $\{\pi_X^U:U \To \Hom(X,X)\}_{X\in \Ob(\C)}$ and we assume that these maps satisfy (ii) of Definition \ref{def2.1}, then $\{i_X^U:\Hom(X,X)\To U^*\}_{X\in \Ob(\C)}$ which are defined by $i_X^U(\beta\otimes x)(u):=\beta\circ \pi_X^U(u)(x)$ are linear maps satisfy (ii) of Definition \ref{def2.3}. Therefore there is a unique linear map $\varphi:\Coend(\mathcal{F})\To U^*$ such that $\varphi\circ i_X=i_X^U$. Define $\varphi^*:U\To \Coend(\mathcal{F})^*$ by $\varphi^*(u)(c)=\varphi(c)(u)$ for $u\in U$ and $c\in \Coend(\mathcal{F})$, then we know $\pi_X\circ \varphi^*=\pi_X^U$ due to $\varphi\circ i_X=i_X^U$ and hence the following diagrams commute for all $X\in \Ob(\C)$.
      $$\xymatrix{
    U \ar[rr]^-{\varphi^*}\ar[dr]_-{\pi_X^U} & & \Coend(\mathcal{F})^* \ar[dl]^-{\pi_X} \\
     & \Hom(X,X) &
     }$$
Let's prove that a linear map satisfying the above commutative diagrams is unique. Assume that there is another linear map $\phi:U\To \Coend(\mathcal{F})^*$ such that $\pi_X\circ \phi=\pi_X^U$, then $\pi_X(\phi(u))=\pi_X(\varphi^*(u))$ since we have shown $\pi_X\circ \varphi^*=\pi_X^U$. But $\pi_X(\alpha)=\sum_{i,j=1}^n \alpha\circ i_X(x^i\otimes x_j)x^j\otimes x_i$ for $\alpha\in \Coend(\mathcal{F})^*$ by definition, so $\phi(u)\circ i_X=\varphi^*(u)\circ i_X$ for $X\in \Ob(\C)$. This implies $\phi(u)=\varphi^*(u)$ and hence $\varphi^*$ is unique if it satisfies the above diagrams.
\end{proof}

\section{Reconstruction theorem of bialgebras}\label{sec2.2}
We assume that $(\C,\otimes,I)$ is a strict tensor category and $(\mathcal{F},\Id_I,\mathcal{F}_2)$ is a tensor functor from $\C$ to $Vect_k^f$ in this section. Recall that if $\C$ is only a $k$-linear abelian category and $\mathcal{F}$ is a $k$-linear functor then we can only get coalgebra structure of $\Coend(\mathcal{F})$, but now we've added a tensor structure to $\C$, naturally we can think of adding some information to $\Coend(\mathcal{F})$. In order to express this idea precisely, we need the following lemmas. Define $\mathcal{F}\otimes \mathcal{F}:\C\times \C\To Vect_k^f$ by $(\mathcal{F}\otimes \mathcal{F})(X,Y)=\mathcal{F}(X)\otimes\mathcal{F}(Y),\;(\mathcal{F}\otimes \mathcal{F})(f,g)=\mathcal{F}(f)\otimes\mathcal{F}(g)$ for $X,Y\in \Ob(\C)$ and $f\in \Hom_{\C}(X,X'),\;g\in \Hom_{\C}(Y,Y')$, then we have the following result. The proof of it is different from that in \cite[Section 8]{A}.

\begin{lemma}\cite[Section 8]{A}\label{lem3.1}
$\Coend(\mathcal{F}\otimes \mathcal{F})\cong \Coend(\mathcal{F})\otimes \Coend(\mathcal{F})$ as coalgebra, where $\Coend(\mathcal{F})\otimes \Coend(\mathcal{F})$ has the tensor product coalgebra structure.
\end{lemma}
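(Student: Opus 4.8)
The plan is to exhibit $\Coend(\mathcal{F})\otimes\Coend(\mathcal{F})$, equipped with a suitable family of structure maps, as a realization of $\Coend(\mathcal{F}\otimes\mathcal{F})$; the coalgebra isomorphism then follows from Propositions \ref{pro2.4} and \ref{pro2.8} together with one bookkeeping computation. Fix a realization $\{i_X:\Hom(X,X)\To\Coend(\mathcal{F})\}_{X\in\Ob(\C)}$ and write objects of $\C\times\C$ as pairs $(X,Y)$, so that the relevant morphism space is $\Hom((X,Y),(X,Y))=\Hom_k(\mathcal{F}(X)\otimes\mathcal{F}(Y),\mathcal{F}(X)\otimes\mathcal{F}(Y))$. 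Because every $\mathcal{F}(X)$ is finite dimensional, there is a canonical isomorphism $\Hom((X,Y),(X,Y))\cong\Hom(X,X)\otimes\Hom(Y,Y)$ sending $(\beta\otimes\gamma)\otimes(x\otimes y)$ to $(\beta\otimes x)\otimes(\gamma\otimes y)$, and I would define $i_{(X,Y)}$ to be the composite of this isomorphism with $i_X\otimes i_Y$. Phrasing $i_{(X,Y)}$ as such a composite makes well-definedness automatic. Note that the tensor functor data $\mathcal{F}_2$ plays no role in this lemma: only finite dimensionality and the universal property are used.

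First I would verify that $\{i_{(X,Y)}\}$ satisfies condition (ii) of Definition \ref{def2.3} for the bifunctor $\mathcal{F}\otimes\mathcal{F}$. A morphism in $\C\times\C$ is a pair $(f,g)$, whose image under $\mathcal{F}\otimes\mathcal{F}$ is $\mathcal{F}(f)\otimes\mathcal{F}(g)$, and this operator acts factorwise on a simple tensor $S\otimes T$. Hence the dinaturality identity for $i_{(X,Y)}$ splits as the product of the two dinaturality identities for $\{i_X\}$ and $\{i_Y\}$ separately, each of which holds because these families realize $\Coend(\mathcal{F})$.

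The step I expect to be the main obstacle is the universal property (iii). Given any vector space $W$ with maps $\{i_{(X,Y)}^W\}$ satisfying (ii), I would build the comparison map $\varphi:\Coend(\mathcal{F})\otimes\Coend(\mathcal{F})\To W$ by applying the universal property of $\Coend(\mathcal{F})$ twice. For fixed $X$ and $S\in\Hom(X,X)$, the family $T\mapsto i_{(X,Y)}^W(S\otimes T)$ indexed by $Y$ satisfies (ii) — by the factorwise reduction above, now for morphisms of the form $(\Id_X,g)$ — and so induces a unique linear map $\Coend(\mathcal{F})\To W$. Checking that the induced maps $\Hom(X,X)\To\Hom_k(\Coend(\mathcal{F}),W)$ themselves satisfy (ii) in the first variable (for morphisms $(f,\Id_Y)$), a second application of the universal property, now in the first tensor factor, assembles these into the desired $\varphi$, characterized by $\varphi(i_X(S)\otimes i_Y(T))=i_{(X,Y)}^W(S\otimes T)$. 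Uniqueness of $\varphi$ is immediate since such elements span $\Coend(\mathcal{F})\otimes\Coend(\mathcal{F})$. This shows $\Coend(\mathcal{F})\otimes\Coend(\mathcal{F})$ is a realization, so by Proposition \ref{pro2.4} it is isomorphic to $\Coend(\mathcal{F}\otimes\mathcal{F})$ as a vector space, and by Proposition \ref{pro2.8} the canonical comparison map is an isomorphism of coalgebras onto $\Coend(\mathcal{F})\otimes\Coend(\mathcal{F})$ equipped with its induced (Definition \ref{def2.6}) coalgebra structure.

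It remains to identify that induced structure with the tensor product coalgebra structure. By Definition \ref{def2.6} the induced comultiplication obeys $\Delta(i_{(X,Y)}(P))=(i_{(X,Y)}\otimes i_{(X,Y)})(\Delta_{(X,Y)}(P))$, where $\Delta_{(X,Y)}$ is the matrix coalgebra comultiplication on $\Hom((X,Y),(X,Y))$ formed from the basis $\{x_i\otimes y_j\}$ of $\mathcal{F}(X)\otimes\mathcal{F}(Y)$. Expanding $\Delta_{(X,Y)}((\beta\otimes\gamma)\otimes(x\otimes y))$ and applying $i_{(X,Y)}\otimes i_{(X,Y)}$ yields $\sum_{i,j}\big(i_X(\beta\otimes x_i)\otimes i_Y(\gamma\otimes y_j)\big)\otimes\big(i_X(x^i\otimes x)\otimes i_Y(y^j\otimes y)\big)$, which is precisely what the tensor product comultiplication $(\Id\otimes\tau\otimes\Id)\circ(\Delta\otimes\Delta)$ produces on $i_X(\beta\otimes x)\otimes i_Y(\gamma\otimes y)$, with $\tau$ the flip of the two middle tensorands; the counits agree since both send this element to $\beta(x)\gamma(y)$. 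The one point needing care is exactly the flip $\tau$, which reconciles the product basis $\{x_i\otimes y_j\}$ with the separate comultiplications on the two factors; tracking the indices confirms equality and finishes the proof.
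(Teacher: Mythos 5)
Your proposal is correct and follows essentially the same route as the paper: both define $i_{X,Y}(S\otimes T)=i_X(S)\otimes i_Y(T)$, show that $\Coend(\mathcal{F})\otimes\Coend(\mathcal{F})$ with these maps is a realization of $\Coend(\mathcal{F}\otimes\mathcal{F})$, and check that the induced coalgebra structure is the tensor product one. The only difference is that you spell out the dinaturality check, the two-step application of the universal property, and the comparison of comultiplications (including the flip $\tau$), all of which the paper compresses into ``it can be seen that.''
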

\begin{proof}
Assume $\{i_X:\Hom(X,X)\To \Coend(\mathcal{F})\}_{X\in \Ob(\C)}$ is a realization of $\Coend(\mathcal{F})$, and we define $i_{X,Y}:\Hom_k(\mathcal{F}(X)\otimes\mathcal{F}(Y),\mathcal{F}(X)\otimes\mathcal{F}(Y))\To \Coend(\mathcal{F})\otimes \Coend(\mathcal{F})$ by $i_{X,Y}(S\otimes T)=i_X(S)\otimes i_Y(T)$ for $S\in \Hom(X,X),\;T\in \Hom(\mathcal{F}(Y),\mathcal{F}(Y))$. It can be seen that $\Coend(\mathcal{F})\otimes \Coend(\mathcal{F})$ with these maps is a realization of $\Coend(\mathcal{F}\otimes \mathcal{F})$ and $\{i_{X,Y}\}_{X,Y\in \Ob(\C)}$ are coalgebra maps, so $\Coend(\mathcal{F}\otimes \mathcal{F})\cong \Coend(\mathcal{F})\otimes \Coend(\mathcal{F})$ as coalgebra.
\end{proof}

Let $\{v_i\}_{i=1}^{n}$ be a basis for $V$ and let $\{v^i\}_{i=1}^{n}$ be the dual basis for $V^*$, and we recall that the comatrix coalgebra $\Hom_k(V,V)$ is defined by $\Delta(\beta\otimes v):=\sum_{i=1}^n (\beta\otimes v_i)\otimes (v^i\otimes v)$ and $\epsilon(\beta\otimes v):=\beta(v)$, where $(\beta\otimes v)(w):=\beta(w)v$ for $\beta\in V^*,\;v,w\in V$, then we have
\begin{lemma}\label{lem3.2}
Assume $P\in \Hom_k(V,V)$ and it is invertible, then $\varphi_P:\Hom_k(V,V) \To \Hom_k(V,V)$ is a coalgebra automorphism, where $\varphi_P(T):=PTP^{-1}$.
\end{lemma}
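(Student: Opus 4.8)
The plan is to verify directly that $\varphi_P$ is a bijective coalgebra morphism. Bijectivity is free: conjugation by $P^{-1}$ gives a two-sided inverse, so $\varphi_P^{-1}=\varphi_{P^{-1}}$, and once $\varphi_P$ is shown to respect $\Delta$ and $\epsilon$ the same argument applied to $P^{-1}$ shows the inverse does too. Hence the whole content is to check the two identities $\Delta\circ\varphi_P=(\varphi_P\otimes\varphi_P)\circ\Delta$ and $\epsilon\circ\varphi_P=\epsilon$ on the spanning rank-one operators $\beta\otimes v$.

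First I would rewrite $\varphi_P$ on rank-one operators. A direct computation gives $P(\beta\otimes v)P^{-1}=(\beta\circ P^{-1})\otimes(Pv)$, since for $w\in V$ one has $P\big((\beta\otimes v)(P^{-1}w)\big)=\beta(P^{-1}w)\,Pv$. The counit identity is then immediate: $\epsilon(\varphi_P(\beta\otimes v))=(\beta\circ P^{-1})(Pv)=\beta(P^{-1}Pv)=\beta(v)=\epsilon(\beta\otimes v)$.

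For the comultiplication I would expand both sides using the defining formula $\Delta(\gamma\otimes u)=\sum_i(\gamma\otimes v_i)\otimes(v^i\otimes u)$. Applying it to $\gamma=\beta\circ P^{-1}$, $u=Pv$ on the left, and applying $\varphi_P\otimes\varphi_P$ term by term on the right, the required equality reduces to the single identity $\sum_i v_i\otimes v^i=\sum_i (Pv_i)\otimes(v^i\circ P^{-1})$ in $V\otimes V^*$. This is exactly the statement that the canonical element of $V\otimes V^*$ is independent of the chosen basis: I would observe that $\{Pv_i\}_i$ is again a basis of $V$ and that $\{v^i\circ P^{-1}\}_i$ is its dual basis, since $(v^j\circ P^{-1})(Pv_i)=v^j(v_i)=\delta_{ij}$. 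Substituting this invariance back collapses the right-hand expansion onto the left-hand one, giving $\Delta\circ\varphi_P=(\varphi_P\otimes\varphi_P)\circ\Delta$.

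The only genuinely delicate point is this basis-change step; everything else is formal. Conceptually it reflects the fact that the comatrix coalgebra $\Hom_k(V,V)$ is dual to the endomorphism algebra and that conjugation is an inner automorphism there, but carrying out the dual-basis bookkeeping cleanly — matching the slots of the tensor $V\otimes V^*$ correctly — is where care is needed. With that identity in hand, bijectivity together with the counit condition yields that $\varphi_P$ is a coalgebra automorphism.
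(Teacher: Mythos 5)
Your proof is correct, but it takes a genuinely different route from the paper's. You verify the coalgebra identities directly on the rank-one operators, using $\varphi_P(\beta\otimes v)=(\beta\circ P^{-1})\otimes(Pv)$ and reducing compatibility with $\Delta$ to the basis-independence of the canonical element $\sum_i v_i\otimes v^i$ of $V\otimes V^*$ (equivalently, that $\{Pv_i\}_i$ and $\{v^i\circ P^{-1}\}_i$ are dual bases); that reduction is the only nontrivial step and your justification of it is sound. The paper instead argues by duality: it introduces the non-degenerate pairing $\langle\beta\otimes v,\gamma\otimes w\rangle=\beta(w)\gamma(v)$ on $\Hom_k(V,V)$, computes that the adjoint of $\varphi_P$ with respect to this pairing is $T\mapsto P^tT(P^t)^{-1}$, observes that this adjoint is an algebra automorphism of the matrix algebra, and concludes that $\varphi_P$ is a coalgebra map because the comatrix coalgebra is dual to the endomorphism algebra under that pairing. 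Your computation is more elementary and self-contained --- it needs no appeal to the principle that the transpose of an algebra map across a non-degenerate pairing is a coalgebra map --- at the cost of some dual-basis bookkeeping; the paper's argument is shorter once that principle is granted, and it makes explicit the conceptual point (which you also note) that $\varphi_P$ is dual to an inner automorphism. Both treatments leave bijectivity and the counit check as the easy formal steps you describe.
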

\begin{proof}
Let $\{v_i\}_{i=1}^{n}$ be a basis for $V$, and we consider the non-degenerate dual pair $<,>:\Hom_k(V,V)\times \Hom_k(V,V)\To k$ which is defined by $<\beta \otimes v, \gamma\otimes w>=\beta(w)\gamma(v)$, where $(\beta \otimes v)(y):=\beta(y)v$ and $ (\gamma\otimes w)(y):=\gamma(y)w$. Then we can define $\phi_P:\Hom_k(V,V)\times \Hom_k(V,V)\To k$ through the equation $<\phi_P(T),S>=<T,\varphi_P(S)>$, and we can check that $\phi_P(T)=P^tT(P^t)^{-1}$ where $P^t$ is the transpose matrix of $P$ for the given basis $\{v_i\}_{i=1}^n$ of $V$. Therefore $\phi_P$ is an algebra map, which implies $\varphi_P$ is a coalgebra map.
\end{proof}

Given a realization of $\Coend(\mathcal{F})$ by $\{i_X:\Hom(X,X)\To \Coend(\mathcal{F})\}_{X\in \Ob(\C)}$, and we denote $i_X(T)$ by $\overline{T}$ for $T\in \Hom(X,X)$. Due to Lemma \ref{lem3.1}, we can use the following commute diagrams for all $X,Y\in \Ob(\C)$ to define multiplication of $\Coend(\mathcal{F})$
$$\xymatrix{
    \Hom_k(\mathcal{F}(X)\otimes\mathcal{F}(Y),\mathcal{F}(X)\otimes\mathcal{F}(Y)) \ar[rr]^-{m_{X,Y}} \ar[d]_-{\overline{\mathcal{F}_2}} & & \Coend(\mathcal{F})\\
    \Hom_k(\mathcal{F}(X \otimes Y),\mathcal{F}(X \otimes Y)) \ar[urr]_-{i_{X\otimes Y}} &  &
     },$$
where $\overline{\mathcal{F}_2}(S\otimes T):=\mathcal{F}_2(S\otimes T)\mathcal{F}_2^{-1}$ for $S\in \Hom(X,X),\;T\in \Hom(Y,Y)$. Then the above commute diagrams determine the product of $\Coend(\mathcal{F})$ by $m(\overline{S}\otimes \overline{T})=\overline{\mathcal{F}_2(S\otimes T)\mathcal{F}_2^{-1}}$. Moveover if we define unit $\eta$ of $\Coend(\mathcal{F})$ by $\eta:=i_I$, then we have

\begin{proposition}\label{pro3.2}
$(\Coend(\mathcal{F}),\Delta,\epsilon,m,\eta)$ is a bialgebra.
\end{proposition}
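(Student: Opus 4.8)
The plan is to verify the five bialgebra axioms in turn, leveraging the coalgebra structure already established in Proposition \ref{pro2.7} and the compatibility of the maps $\{i_X\}$ with the comatrix coalgebra structures. Since $\Coend(\mathcal{F})$ is linearly spanned by elements of the form $\overline{S}$ with $S\in \Hom(X,X)$, it suffices to check every identity on such elements (and tensor products thereof), and then invoke linearity together with the initial-object property of Proposition \ref{pro2.4}. First I would confirm associativity of $m$: starting from $m\bigl(m(\overline{R}\otimes \overline{S})\otimes \overline{T}\bigr)$ for $R\in\Hom(X,X),\,S\in\Hom(Y,Y),\,T\in\Hom(Z,Z)$, I would expand using $m(\overline{S}\otimes \overline{T})=\overline{\mathcal{F}_2(S\otimes T)\mathcal{F}_2^{-1}}$ and compare both bracketings; the equality should follow from the associativity coherence of the tensor functor $(\mathcal{F},\Id_I,\mathcal{F}_2)$, i.e. from the hexagon-type constraint relating $\mathcal{F}_2$ on $(X\otimes Y)\otimes Z$ and $X\otimes(Y\otimes Z)$ together with strictness of $\C$. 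The unit axioms, that $\eta=i_I$ is a two-sided unit, would follow from the tensor-functor normalization $\mathcal{F}(I)=k$ with $\mathcal{F}_2$ reducing to the identity when one tensorand is $I$, so that $m(\overline{\Id_I}\otimes \overline{T})=\overline{T}=m(\overline{T}\otimes \overline{\Id_I})$.

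The genuinely new content is the compatibility between $m$ and the comultiplication, namely that $\Delta$ and $\epsilon$ are algebra maps (equivalently $m,\eta$ are coalgebra maps). For $\epsilon$ this is a short computation: $\epsilon\bigl(m(\overline{S}\otimes\overline{T})\bigr)=\epsilon_{X\otimes Y}\bigl(\mathcal{F}_2(S\otimes T)\mathcal{F}_2^{-1}\bigr)$, and tracing through the definition $\epsilon_X(\beta\otimes x)=\beta(x)$ this should equal $\epsilon_X(S)\,\epsilon_Y(T)=\epsilon(\overline{S})\epsilon(\overline{T})$, using that conjugation by the invertible $\mathcal{F}_2$ preserves the trace-like pairing. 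For $\Delta$ I would argue that $\Delta$ is multiplicative by combining two earlier results: Lemma \ref{lem3.1}, which identifies $\Coend(\mathcal{F}\otimes\mathcal{F})$ with $\Coend(\mathcal{F})\otimes\Coend(\mathcal{F})$ as coalgebras, and Lemma \ref{lem3.2}, which says conjugation by an invertible operator is a coalgebra automorphism of the comatrix coalgebra. Concretely, since $\overline{\mathcal{F}_2}(S\otimes T)=\mathcal{F}_2(S\otimes T)\mathcal{F}_2^{-1}=\varphi_{\mathcal{F}_2}(S\otimes T)$ is precisely the conjugation map of Lemma \ref{lem3.2} (with $P=\mathcal{F}_2$), the map $\overline{\mathcal{F}_2}$ is a coalgebra morphism $\Hom_k(\mathcal{F}(X)\otimes\mathcal{F}(Y),\mathcal{F}(X)\otimes\mathcal{F}(Y))\to \Hom_k(\mathcal{F}(X\otimes Y),\mathcal{F}(X\otimes Y))$; composing with the coalgebra map $i_{X\otimes Y}$ and identifying the source coalgebra with the tensor product via Lemma \ref{lem3.1}, the factorization defining $m_{X,Y}$ exhibits $m$ as a composite of coalgebra maps, hence $m$ itself is a coalgebra map.

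The main obstacle I anticipate is the multiplicativity of $\Delta$, since the naive index-chasing through $\Delta_{X\otimes Y}\bigl(\mathcal{F}_2(S\otimes T)\mathcal{F}_2^{-1}\bigr)$ involves a basis of $\mathcal{F}(X\otimes Y)$ that need not be a product basis of $\mathcal{F}(X)\otimes\mathcal{F}(Y)$, so a direct computation is awkward. The clean route, as sketched above, is to avoid bases entirely and phrase the argument functorially: recognize $\overline{\mathcal{F}_2}$ as the conjugation automorphism of Lemma \ref{lem3.2} and then read off that $m$ is built from coalgebra maps. The one point requiring care is that Lemma \ref{lem3.2} is stated for an automorphism of a single comatrix coalgebra $\Hom_k(V,V)$, whereas here $\mathcal{F}_2$ intertwines two a priori different comatrix coalgebras on $\mathcal{F}(X)\otimes\mathcal{F}(Y)$ and $\mathcal{F}(X\otimes Y)$; I would therefore first note that these two spaces are canonically identified by the isomorphism $\mathcal{F}_2$ as vector spaces, so that the comatrix coalgebra structures correspond and $\varphi_{\mathcal{F}_2}$ is a coalgebra map between them, after which the argument applies verbatim. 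Once all five axioms are checked on spanning elements $\overline{S},\overline{T}$, linearity and the uniqueness clause of the initial-object property complete the proof.
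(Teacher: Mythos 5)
Your proposal is correct and follows essentially the same route as the paper: associativity from the tensor-functor coherence $\mathcal{F}_2\circ(\mathcal{F}_2\otimes\id)=\mathcal{F}_2\circ(\id\otimes\mathcal{F}_2)$ together with strictness of $\C$, the unit from the normalization of $\mathcal{F}_2$ at $I$, and compatibility of $m$ with the coalgebra structure by writing $m_{X,Y}=i_{X\otimes Y}\circ\overline{\mathcal{F}_2}$ and invoking Lemma \ref{lem3.1} and Lemma \ref{lem3.2}. Your remark that Lemma \ref{lem3.2} must be transported across the identification $\mathcal{F}_2:\mathcal{F}(X)\otimes\mathcal{F}(Y)\to\mathcal{F}(X\otimes Y)$ is a point the paper passes over silently, and is worth keeping.
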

\begin{proof}
Firstly, we show $(\Coend(\mathcal{F}),m,\eta)$ is an algebra. For simple, we denote the identity map from $\Coend(\mathcal{F})$ to $\Coend(\mathcal{F})$ by $id$. By definition, $m\circ(m\otimes id)(\overline{W})=i_{(X\otimes Y)\otimes Z}[\mathcal{F}_2\circ (\mathcal{F}_2\otimes id) \circ W \circ (\mathcal{F}_2\otimes id)^{-1}\circ \mathcal{F}_2^{-1}]$ and $m\circ(id_X \otimes m)(\overline{W})=i_{X\otimes (Y\otimes Z)}[\mathcal{F}_2\circ (id \otimes \mathcal{F}_2) \circ W \circ (id \otimes \mathcal{F}_2)^{-1}\circ \mathcal{F}_2^{-1}]$ where $W\in \Hom_k(\mathcal{F} (X)^{\otimes 3},\mathcal{F}(X)^{\otimes 3})$. Since $\C$ is strict and $[\mathcal{F}_2\circ (\mathcal{F}_2\otimes id)=\mathcal{F}_2\circ (id \otimes \mathcal{F}_2)$ due to $(\mathcal{F},\Id_I,\mathcal{F}_2)$ is a tensor functor, we know $m$ satisfy associativity. Due to $m\circ (id\otimes \eta)(\overline{S})=i_{X}[(\mathcal{F}_2)_{X,I}(S\otimes 1)(\mathcal{F}_2^{-1})_{X,I}]=\overline{S}$, we have $m\circ (id\otimes \eta)=id$. Similarly, we get $(id\otimes \eta)\circ m=id$ and hence $(\Coend(\mathcal{F}),m,\eta)$ is an algebra.

Secondly, we show $m,\eta$ are coalgebra maps. It can be seen that $\eta$ is a coalgebra map by definition of coalgebra structure of $\Coend(\mathcal{F})$. To prove $m$ is a coalgebra map, we need only to check that $\{m_{X,Y}\}_{X,Y\in \Ob(\C)}$ are coalgebra maps. We have known $i_{X\otimes Y}$ is a coalgebra map since the definition of coalgebra structure of $\Coend(\mathcal{F})$.  Because $m_{X,Y}=i_{X\otimes Y}\circ \overline{\mathcal{F}_2}$ and $\overline{\mathcal{F}_2}$ is a coalgebra map by Lemma \ref{lem3.2}, we get that $m_{X,Y}$ is a coalgebra map and hence we have completed the proof.
\end{proof}
Since $\Coend(\mathcal{F})$ is a bialgebra, we know that finite dimensional right comodules of $\Coend(\mathcal{F})$ is a strict tensor category and we denote it by $\C_{\mathcal{F}}$. Then the tensor functor $\mathcal{F}$ induces a unique tensor functor $\mathcal{E}:\C \To \C_{\mathcal{F}}$ satisfies $\mathcal{F}=\mathcal{F}'\circ \mathcal{E}$ and $\mathcal{F}_2=\mathcal{F}'_2\circ \mathcal{E}_2$, where $\mathcal{F}':\C_{\mathcal{F}} \To Vect_k^f$ is the forgetful functor. A natural question is when $\mathcal{E}$ is an equivalent tensor functor? The following reconstruction theorem of bialgebras is the answer to this question.
\begin{theorem}\cite[Theorem 3]{A}
If $\mathcal{F}$ is exact and faithful, then $\mathcal{E}$ is an equivalence of tensor functor.
\end{theorem}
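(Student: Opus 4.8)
The plan is to show that $\mathcal{E}\colon\C\To\C_{\mathcal{F}}$ is fully faithful and essentially surjective; since $\mathcal{E}$ already carries the tensor structure $(\mathcal{E},\mathcal{E}_2)$ with $\mathcal{F}=\mathcal{F}'\circ\mathcal{E}$ and $\mathcal{F}_2=\mathcal{F}'_2\circ\mathcal{E}_2$, an equivalence of the underlying $k$-linear categories automatically upgrades to an equivalence of tensor categories. I write $\mathcal{E}(X)=(\mathcal{F}(X),\rho_X)$, where $\rho_X$ is the $\Coend(\mathcal{F})$-coaction determined by the structure maps $i_X$. The two hypotheses enter in complementary ways: faithfulness of $\mathcal{F}$ makes each $\mathcal{F}\colon\Hom_{\C}(X,Y)\To\Hom_k(\mathcal{F}(X),\mathcal{F}(Y))$ injective (in particular all $\Hom$-spaces in $\C$ are finite dimensional), while exactness lets me compute kernels, images, intersections and sums of subobjects after applying $\mathcal{F}$. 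Faithfulness of $\mathcal{E}$ is then immediate: from $\mathcal{F}=\mathcal{F}'\circ\mathcal{E}$ and the faithfulness of both $\mathcal{F}$ and the forgetful functor $\mathcal{F}'$, the action of $\mathcal{E}$ on morphisms is injective.

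For fullness I would use a \emph{graph trick}. Given a comodule morphism $\phi\colon\mathcal{E}(X)\To\mathcal{E}(Y)$, its graph $\Gamma_\phi=\{(x,\phi(x))\}\subseteq\mathcal{F}(X)\oplus\mathcal{F}(Y)=\mathcal{F}(X\oplus Y)$ is a sub-comodule of $\mathcal{E}(X\oplus Y)$. The whole problem therefore reduces to the key lemma (L1): \emph{every sub-comodule $N\subseteq\mathcal{E}(Z)$ is of the form $\mathcal{F}(Z')$ for a unique subobject $Z'\hookrightarrow Z$, and $Z'\mapsto\mathcal{F}(Z')$ gives an isomorphism between the lattice of subobjects of $Z$ and the lattice of sub-comodules of $\mathcal{E}(Z)$.} Granting (L1) for $Z=X\oplus Y$, one has $\Gamma_\phi=\mathcal{F}(Z')$; since $\Gamma_\phi$ projects isomorphically onto $\mathcal{F}(X)$ and $\mathcal{F}$ (being exact and faithful) reflects isomorphisms, the composite $Z'\hookrightarrow X\oplus Y\To X$ is an isomorphism, and $f:=\mathrm{pr}_Y\circ(\mathrm{pr}_X)^{-1}\colon X\To Y$ satisfies $\mathcal{F}(f)=\phi$.

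For essential surjectivity, recall from the construction that $\Coend(\mathcal{F})$ is the union of the finite-dimensional sub-coalgebras $C_W:=i_W(\Hom(W,W))$ (each is a sub-coalgebra since $i_W$ is a coalgebra map, and $C_W+C_{W'}\subseteq C_{W\oplus W'}$). Hence a finite-dimensional comodule $M$, whose coaction lands in $M\otimes C_0$ for some finite-dimensional sub-coalgebra $C_0$, is already a $C_W$-comodule for a single $W\in\Ob(\C)$. Dually $M$ is a module over the finite-dimensional algebra $C_W^{*}=\im(\pi_W)\subseteq\Hom(W,W)$ (using $\End(\mathcal{F})\cong\Coend(\mathcal{F})^{*}$, Proposition \ref{pro2.9}), and as such it is a subquotient of a finite free module, which embeds into copies of the defining module $\mathcal{F}(W)$. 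Thus $M$ is a subquotient of $\mathcal{E}(W)^{\oplus r}=\mathcal{E}(W^{\oplus r})$; now (L1), together with its dual statement for quotient comodules (again via exactness), realizes $M$ as $\mathcal{E}$ applied to a subquotient of $W^{\oplus r}$ in $\C$.

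Everything rests on (L1), which is the main obstacle and the only place exactness is genuinely used. The forward inclusion is easy: a subobject $Z'\hookrightarrow Z$ gives a monomorphism $\mathcal{E}(Z')\To\mathcal{E}(Z)$, so $\mathcal{F}(Z')$ is a sub-comodule, and exactness shows $Z'\mapsto\mathcal{F}(Z')$ is injective and preserves meets and joins. The hard direction — that every coaction-stable subspace descends to a subobject — I would prove by identifying the finitely generated abelian subcategory $\langle Z\rangle\subseteq\C$ with the module category of the finite-dimensional algebra $A_Z:=\End(\mathcal{F}|_{\langle Z\rangle})$: the restriction of $\mathcal{F}$ is still exact and faithful, hence represents $\langle Z\rangle$ as $A_Z\text{-}\mathrm{mod}_f$, under which subobjects correspond exactly to $A_Z$-submodules of $\mathcal{F}(Z)$, i.e. to subspaces stable under the action of $A_Z=C_Z^{*}$, which are precisely the sub-comodules $N$. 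I expect the technical core to be this identification $\langle Z\rangle\simeq A_Z\text{-}\mathrm{mod}_f$ — the covariant ($\End$) reconstruction in the finite-dimensional case — after which passage to the filtered colimit over $Z$ makes the comodule reconstruction, and hence the equivalence $\mathcal{E}$, formal.
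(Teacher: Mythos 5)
The paper states this theorem without proof, quoting it from Joyal--Street, so there is no in-paper argument to compare against; I am therefore assessing your outline on its own terms. The global architecture you propose is the standard and correct one: faithfulness of $\mathcal{E}$ from $\mathcal{F}=\mathcal{F}'\circ\mathcal{E}$; fullness via the graph trick inside $\mathcal{E}(X\oplus Y)$ combined with the fact that an exact faithful functor reflects isomorphisms; essential surjectivity by exhausting $\Coend(\mathcal{F})$ by the directed family of finite-dimensional subcoalgebras $C_W=i_W(\Hom(W,W))$ and realizing any finite-dimensional comodule as a subquotient of some $\mathcal{E}(W)^{\oplus r}$; and the observation that a $k$-linear equivalence underlying a tensor functor upgrades automatically to a tensor equivalence. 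These reductions are all sound, and your accounting of where exactness versus faithfulness enters is accurate.

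The genuine gap is that the entire proof is funnelled into your lemma (L1) --- every coaction-stable subspace of $\mathcal{F}(Z)$ is $\mathcal{F}$ of a subobject of $Z$ --- and (L1) is in turn reduced to the claim that the restriction of $\mathcal{F}$ identifies the subcategory $\langle Z\rangle$ with $A_Z\text{-}\mathrm{mod}_f$ for $A_Z=\End(\mathcal{F}|_{\langle Z\rangle})$, which you explicitly leave unproved (``I expect the technical core to be this identification''). That identification is not formal: it is the covariant reconstruction theorem for an exact faithful $k$-linear functor on a finitely generated abelian subcategory, and it has essentially the same depth as the theorem you are trying to prove. Establishing it requires a genuinely new construction that appears nowhere in your outline --- for instance producing a projective object $P\in\langle Z\rangle$ pro-representing $\mathcal{F}|_{\langle Z\rangle}$ (classically obtained as the largest subobject of $\bigoplus_i \mathcal{F}(X_i)^{*}\otimes X_i$ on which two evaluation maps agree, in the style of Deligne--Milne), or a Barr--Beck comonadicity argument. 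As written, the proposal is a correct reduction of the theorem to an equally hard unproved statement rather than a proof. Two smaller points you should also close: in the essential-surjectivity step, check that the $C_W^{*}$-module structure on $\mathcal{F}(W)$ coming from $\pi_W$ (via Proposition \ref{pro2.9} and its explicit formula) agrees with the one dual to the coaction $\rho_W$, and that for finite-dimensional comodules the lattices of $C_W^{*}$-submodules and of subcomodules coincide, since both identifications are used silently.
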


\end{document}